\documentclass[reqno]{amsart}

\usepackage{amssymb} 
\usepackage{amsmath}
 \usepackage{graphicx}
 \usepackage{cite}
 \usepackage{hyperref}

\newtheorem{theorem}{Theorem}[section]
\newtheorem{lemma}[theorem]{Lemma}
\newtheorem{proposition}[theorem]{Proposition}

\theoremstyle{definition}

\theoremstyle{remark}
\newtheorem{remark}[theorem]{Remark}
\numberwithin{equation}{section}
\theoremstyle{claim}
\newtheorem{claim}[theorem]{Claim}

\newcommand{\R}{{\mathbb R}}

\newcommand{\8}{\infty}

\author{Mark Allen}
\address[Mark Allen]{Department of Mathematics, Brigham Young University, Provo,
  UT 84602}
\email{allen@mathematics.byu.edu}

\author{Dennis Kriventsov}
\address[Dennis Kriventsov]{Courant Institute of Mathematical Sciences, New York University, 
  NY 10012}
\email{dennisk@cims.nyu.edu}

\keywords{ACF monotonicity formula, spiral interface}
\subjclass[2010]{35R35,35J05}

\title[Spiral Interface]{A spiral interface with 
positive Alt-Caffarelli-Friedman limit at the origin}

\begin{document}

\begin{abstract}
	We give an example of a pair of nonnegative subharmonic functions with disjoint support for which the Alt-Caffarelli-Friedman monotonicity formula has strictly positive limit at the origin, and yet the interface between their supports lacks a (unique) tangent there.  This clarifies a remark appearing in the literature (see \cite{cs05}) that the positivity of the limit of the ACF formula implies unique tangents; this is true under some additional assumptions, but false in general. In our example, blow-ups converge to the expected piecewise linear two-plane function along subsequences, but the limiting function depends on the subsequence due to the spiraling nature of the interface.
\end{abstract}

\maketitle

\section{Introduction}

 The Alt-Caffarelli-Friedman monotonicity formula (hereafter denoted ACF formula)
 has been and continues to be a powerful tool in the study of free boundary problems. It was introduced in \cite{acf84} in order to prove that the solutions to a two-phase Bernoulli free boundary problem are Lipschitz continuous. The formula was then 
 adapted to treat more general two-phase problems, and a discussion of the formula, its proof, and its applications to two-phase free boundary problems may be 
 found in \cite{cs05}. The ACF formula has also been effective in studying obstacle-type problems, and applications of the 
 formula for obstacle-type problems are found in \cite{psu12}. Further applications also include the study of 
 segregation problems in \cite{ckl09}. While the most typical use of the formula is to
 prove the optimal regularity of solutions or flatness of the free boundary, it can also be used for other purposes, such as to show the separation of phases in free boundary problems (see \cite{ap12,alp15,as16}).

 The key property of the ACF formula \eqref{e:acf} is given in the following proposition:
 \begin{proposition}   \label{p:acf}
  Let $u_1,u_2 \geq 0$ be two continuous subharmonic functions in $B_R$ with 
  $u_1 \cdot u_2 =0$ and $u_1(0)=u_2(0)=0$. Then 
   \begin{equation}   \label{e:acf}
    \Phi(r, u_1, u_2):= \frac{1}{r^4} \int_{B_r(0)} \frac{|\nabla u_1|^2}{|x|^{n-2}} 
      \int_{B_r(0)} \frac{|\nabla u_2|^2}{|x|^{n-2}} 
   \end{equation}
   is nondecreasing for $0<r<R$. Consequently, the limit 
   \[
   \Phi(0+,u_1,u_2): = \lim_{r\searrow 0}\Phi(r,u_1,u_2)
   \] 
   is well defined. 
 \end{proposition}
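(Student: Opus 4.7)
The plan is to establish $\frac{d}{dr}\log\Phi(r) \ge 0$ on $(0,R)$. Writing $I_i(r) := \int_{B_r}|\nabla u_i|^2/|x|^{n-2}\,dx$, so that $\Phi(r) = r^{-4} I_1(r) I_2(r)$, the coarea formula gives $I_i'(r) = r^{2-n}\int_{\partial B_r}|\nabla u_i|^2\,d\sigma$ for a.e.\ $r$, and hence
\[
r\,\frac{d}{dr}\log\Phi(r) \ = \ -4 \;+\; \frac{rI_1'(r)}{I_1(r)} \;+\; \frac{rI_2'(r)}{I_2(r)}.
\]
The task therefore becomes the a.e.\ inequality $rI_1'/I_1 + rI_2'/I_2 \ge 4$.

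The core estimate, proved separately for each $i$, is the Rayleigh-type inequality
\[
\frac{rI_i'(r)}{I_i(r)} \ \ge \ 2\alpha_i(r),
\]
where $\alpha_i(r) > 0$ is the characteristic exponent of $\Omega_i(r) := \{u_i > 0\}\cap \partial B_r$, defined by $\alpha_i(\alpha_i + n - 2) = \lambda_1(\Omega_i(r))$ with $\lambda_1$ the first Dirichlet eigenvalue of $-\Delta_{S^{n-1}}$. I would prove this in polar coordinates $x = \rho\theta$. In the equality case---the separable harmonic profile $u = \rho^{\alpha_i}\phi_i(\theta)$ with $\phi_i$ a first Dirichlet eigenfunction on $\Omega_i(r)$---a direct computation yields $I_i(r) = \tfrac{r^{2\alpha_i}}{2}(2\alpha_i + n - 2)\int\phi_i^2$ and $rI_i'(r) = 2\alpha_i I_i(r)$. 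For the general subharmonic case, one decomposes the trace of $u_i$ on $\partial B_r$ in the Dirichlet eigenbasis on $\Omega_i(r)$, observes that higher eigenvalues correspond to strictly larger exponents, and uses the ODE inequalities satisfied by the radial Fourier coefficients (a consequence of $\Delta u_i \ge 0$) to show that the first mode's radial growth at least matches its harmonic counterpart.

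Granted this, the proof concludes via the Friedland--Hayman inequality: for disjoint open subsets $\Omega_1, \Omega_2 \subset S^{n-1}$, one has $\alpha_1 + \alpha_2 \ge 2$, with equality for complementary hemispheres. Combining yields
\[
r\,\frac{d}{dr}\log\Phi(r) \ \ge \ -4 + 2\alpha_1(r) + 2\alpha_2(r) \ \ge \ 0,
\]
so $\Phi$ is nondecreasing; being nonnegative, it has a limit at $0^+$.

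The main obstacle is the Rayleigh-quotient step when $\Omega_i(r)$ fails to be regular. In the very setting this paper is concerned with, the sets $\Omega_i(r)$ will spiral and thus be highly irregular, so the eigenfunctions $\phi_i$ may be singular and the traces of $u_i$ on $\partial B_r$ are merely $H^1$. The standard remedy is to approximate $u_i$ by smoother data, prove the inequality under enough regularity for classical integration by parts, and then pass to the limit using monotonicity (or lower semicontinuity) of $\lambda_1$ in the set. Friedland--Hayman itself is a deep classical fact proved by spherical symmetrization, and can be cited as a black box.
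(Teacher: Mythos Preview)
The paper does not supply its own proof of this proposition; it is quoted as the foundational Alt--Caffarelli--Friedman result, with the reader referred to \cite{acf84} and \cite{cs05}. Your outline is the classical strategy found in those references: differentiate $\log\Phi$, reduce to the per-function inequality $rI_i'/I_i \ge 2\alpha_i(r)$, and close with Friedland--Hayman.

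One comment on your sketch of the key inequality. The standard route does not proceed by expanding the trace of $u_i$ on $\partial B_r$ in Dirichlet eigenfunctions and tracking radial ODEs for the coefficients; it is a direct integration-by-parts argument. Using subharmonicity, $u_i\ge 0$, $u_i(0)=0$, and the fact that $|x|^{2-n}$ is (up to a constant) the fundamental solution, one gets
\[
I_i(r) \ \le \ \frac{1}{r^{n-2}}\int_{\partial B_r} u_i\,\partial_\nu u_i \, d\sigma,
\]
and then the splitting $u_i\,\partial_\nu u_i \le \tfrac{\beta}{r}u_i^2 + \tfrac{r}{4\beta}(\partial_\nu u_i)^2$, the spherical Poincar\'e inequality $\int_{\partial B_r}|\nabla_\tau u_i|^2 \ge \lambda_1(\Omega_i(r))\,r^{-2}\int_{\partial B_r} u_i^2$, and the choice $\beta=\alpha_i$ give $I_i(r)\le \tfrac{r}{2\alpha_i}I_i'(r)$. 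This argument needs nothing about $\Omega_i(r)$ beyond its being open in $S^{n-1}$: the first Dirichlet eigenvalue and the associated Poincar\'e inequality are defined for arbitrary open subsets, and monotonicity of $\lambda_1$ under inclusion is immediate from the variational characterization. So the regularity obstacle you flag in your last paragraph---irregular spherical caps coming from the spiral---does not actually arise, and the approximation step you propose is unnecessary. Your eigenfunction-expansion idea could be pushed through, but it is more delicate than what is needed.
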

 
 Our paper is motivated by the following claim, which appears as Lemma 12.9 in \cite{cs05}. 
 \begin{claim}  \label{claim}
  Let $u \geq 0$ be continuous in $B_1$ and harmonic in $\{u>0\}$. Let $\Omega_1$ be a connected component
  of $\{u>0\}$ and let $0 \in \partial \Omega_1$. If $u_1 = u |_{\Omega_1}$ and 
  $u_2 = u-u_1$, then if $\Phi(0+,u_1,u_2)>0$, exactly two connected components $\Omega_1$ and $\Omega_2$ of $\{u>0\}$ are tangent at 
  $0$, and in a suitable system of coordinates, 
   \begin{equation}   \label{e:tangent}
    u(x)=\alpha x_1^+ + \beta x_1^- + o(|x|)
   \end{equation}
   with $\alpha, \beta >0$. 
 \end{claim}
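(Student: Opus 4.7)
The natural approach is a blow-up analysis driven by the equality case of the ACF monotonicity formula. First I would form the rescalings $u_{i,r}(x) := u_i(rx)/r$ and use the scaling identity $\Phi(\rho, u_{1,r}, u_{2,r}) = \Phi(\rho r, u_1, u_2)$, which is bounded by $\Phi(R/2, u_1, u_2)$ for $r$ small, to get uniform control on the weighted Dirichlet energies of the rescaled pair on any fixed $B_\rho$. Combined with subharmonicity (so that interior $L^\infty$ bounds follow from $L^2$ bounds), along any subsequence $r_k \searrow 0$ one extracts blow-up limits $v_1, v_2$ that are nonnegative, subharmonic, continuous, and supported on disjoint sets.

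Next, passing to the limit in the scaling identity,
\begin{equation*}
  \Phi(\rho, v_1, v_2) = \lim_{k \to \infty} \Phi(\rho r_k, u_1, u_2) = \Phi(0+, u_1, u_2)
\end{equation*}
for every $\rho > 0$. Thus equality holds in the ACF formula on $(v_1, v_2)$; tracing through the proof of Proposition~\ref{p:acf}, this forces each $v_i$ to be $1$-homogeneous with its restriction to $S^{n-1}$ saturating the Friedland--Hayman spherical eigenvalue inequality on disjoint open subsets of $S^{n-1}$. The equality case of Friedland--Hayman then identifies those spherical traces as first eigenfunctions on complementary hemispheres, so that $v_1(x) = \alpha (x \cdot e)^+$ and $v_2(x) = \beta(x \cdot e)^-$ for some unit vector $e$ and constants $\alpha,\beta \geq 0$; the positivity of $\Phi(0+)$ forces $\alpha, \beta > 0$. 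This already yields that $\Omega_1$ and exactly one other component $\Omega_2$ of $\{u>0\}$ are tangent at $0$ along every subsequence.

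The main obstacle---and, given the purpose of this paper, the step I expect to fail---is upgrading from subsequential convergence to a \emph{unique} blow-up, which is what the expansion \eqref{e:tangent} asserts. One would hope to pin down $e = e(r_k)$ via an epiperimetric-type inequality, a Lojasiewicz--Simon argument on $S^{n-1}$, or some improvement of flatness coming from harmonicity of $u$ in $\{u>0\}$. But nothing in the hypotheses rules out the possibility that the tangent direction rotates with scale: harmonicity holds only in the unknown set $\{u>0\}$, and subharmonicity of $u_1, u_2$ alone is too weak to prevent the interface from winding around the origin. I therefore expect the counterexample to be built by constructing $\Omega_1, \Omega_2$ whose common boundary spirals into $0$ at a suitably slow rate, producing two-plane blow-ups along every direction $e$ while keeping $\Phi(0+)$ strictly positive---exactly as the title and abstract suggest.
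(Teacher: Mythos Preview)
Your reading of the situation is correct, and indeed there is nothing to ``prove'' here: the paper does not supply a proof of Claim~\ref{claim} but rather its main theorem (Theorem~\ref{t:main}) is that the Claim is \emph{false}. Your sketch accurately isolates the point at which the standard blow-up argument breaks down---the passage from subsequential two-plane limits to a \emph{unique} tangent direction~$e$---and your anticipated mechanism for the counterexample (a spiral interface whose approximate normal rotates slowly enough that $\Phi(0+)$ stays positive) is exactly what the paper builds in Sections~\ref{s:conformal}--\ref{s:counter}.

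The one place where your account could be sharpened is the quantitative content: the paper does not merely assert that a slow spiral works, but pins down the rate. Via an explicit Schwarz--Christoffel map it shows that a single ``$\theta$-turn'' of the interface costs at most a multiplicative factor $(1-\theta^2)$ in $J(0+,u)=c\sqrt{\Phi(0+)}$. Choosing $\theta_k \sim 1/k$ then gives total rotation $\sum\theta_k=\infty$ (so no unique tangent) while $\prod(1-\theta_k^2)>0$ (so $\Phi(0+)>0$). Your proposal correctly guesses the structure but not this balance, which is the heart of the construction.
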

 As no proof of this Lemma 12.9 is provided in \cite{cs05} (it is followed only by some general remarks), it is not entirely clear whether it is meant to be taken at face value. We note, for example, that if $u$ is also assumed to satisfy a two-phase free boundary problem of the type treated in \cite{cs05}, then the claim is valid, but requires heavy use of the free boundary relation to prove. 
 
 Claim \ref{claim}, and in particular the question of whether it is true in the generality stated, drew the authors' interest when the second author was tempted to use it while working on certain eigenvalue 
 optimization problems \cite{kl17} but was unable to write down a proof. Typically, a monotonicity formula is applied together with other tools making explicit use of the free boundary relation in order to prove regularity of an interface; however, Claim \ref{claim} would imply that the ACF monotonicity formula, on its own, yields some regularity of the interface. This makes the claim very powerful and useful, especially in problems where the free boundary condition is difficult to exploit, such as the vector-valued free boundary problems arising from spectral optimization \cite{kl16,kl17}. 
 
 Unfortunately, it is also not true: the main result of this paper is to 
 provide a counterexample to Claim \ref{claim}. 
 \begin{theorem}  \label{t:main}
  For any dimension $n\geq 2$, 
  there exist two continuous subharmonic functions $u, \tilde{u} \geq 0$ with $u, \tilde{u}$ both harmonic in their respective positivity sets and $u \cdot \tilde{u}=0$. 
  Furthermore, $\Phi(0+,u,\tilde{u})>0$. However, $\partial \{u>0\}$ and $\partial \{\tilde{u}>0 \}$ (which are given by a piecewise smooth, connected hypersurface when restricted to any annulus $B_1 \backslash B_r$) do not admit tangents (or approximate tangents) at the origin, nor do there exist numbers $\alpha,\beta>0$ and a change 
  of coordinates such that $u + \tilde{u} = \alpha x_1^+ + \beta x_1^- + o(|x|)$. 
 \end{theorem}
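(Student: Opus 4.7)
The plan is to construct $(u,\tilde u)$ by hand in $\R^2$ with a spiraling interface, then extend to higher dimensions by translation. Fix a smooth bounded function $\phi:(0,1]\to\R$ with no limit as $r\searrow 0$ but with $|r\phi'(r)|\to 0$ (for instance $\phi(r)=\sin(\log\log(1/r))$). In polar coordinates, let $\Gamma$ be the union of the two antipodal spiral arcs $\theta\equiv\phi(r)\pmod\pi$ together with the origin; this is a piecewise smooth $1$-dimensional set splitting $B_1$ into two subregions $\Omega_1,\Omega_2$. Define $u$ as the harmonic function on $\Omega_1$ vanishing on $\Gamma\cap\partial\Omega_1$ with a smooth strictly positive trace on $\partial B_1\cap\partial\Omega_1$, and $\tilde u$ analogously on $\Omega_2$; extend both by zero to $B_1$. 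For $n\geq 3$, I would set $u(x_1,\dots,x_n):=u(x_1,x_2)$ (abuse of notation) in $\R^n=\R^2\times\R^{n-2}$.

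Continuity, nonnegativity, harmonicity in the positivity sets, and disjointness of supports are immediate; subharmonicity in $B_1$ follows since both functions vanish on $\Gamma$ and are nonnegative. For the ACF formula, I would study the rescalings $u_r(x):=u(rx)/r$, $\tilde u_r(x):=\tilde u(rx)/r$. The slow variation of $\phi$ ensures that along any subsequence $r_k\to 0$ with $\phi(r_k)\to\phi_\infty$, the rescaled interface converges in Hausdorff distance on compact subsets of $\R^2\setminus\{0\}$ to a single straight line $L_{\phi_\infty}$ through $0$. Assuming the uniform two-sided estimate $c|x|\leq u(x)+\tilde u(x)\leq C|x|$ (see the last paragraph), Arzel\`a--Ascoli and elliptic boundary regularity yield a subsequential $H^1_{\mathrm{loc}}$ limit $(U,\tilde U)$; by Schwarz reflection across $L_{\phi_\infty}$ together with the at-most-linear growth, $U=\alpha(x\cdot\nu_\infty)^+$ and $\tilde U=\beta(x\cdot\nu_\infty)^-$ with $\alpha,\beta>0$. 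Then $\Phi(r,u,\tilde u)=\Phi(1,u_r,\tilde u_r)$ together with weak $L^2$-lower semicontinuity gives $\Phi(0+,u,\tilde u)\geq\Phi(1,U,\tilde U)=c_n\alpha\beta>0$. A direct scaling calculation shows that for $n\geq 3$, $\int_{B_r^n}|\nabla u|^2|x|^{2-n}\,dx=C_n r^2$ with $C_n$ finite (the singularity at $0$ of $|x|^{2-n}$ is integrable after the transverse $(n-2)$-dimensional integration), so $\Phi(0+)$ remains a positive constant.

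The nonexistence of tangents and of any expansion $u+\tilde u=\alpha x_1^++\beta x_1^-+o(|x|)$ in a fixed frame then follows at once: because $\phi$ has distinct $\liminf$ and $\limsup$, the direction $\nu_\infty$ depends on the subsequence, so no line is an approximate tangent to $\Gamma$ at $0$ and the blow-up limit is not unique, ruling out any such asymptotic expansion in any change of coordinates.

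The main technical obstacle is establishing the two-sided bound $c|x|\leq u(x)+\tilde u(x)\leq C|x|$, which underwrites the blow-up compactness and the nondegeneracy $\alpha,\beta>0$. Standard boundary regularity is unavailable since $\Gamma$ has no tangent at $0$, so my approach is to construct explicit harmonic super- and subsolutions on each dyadic annulus $A_k=B_{2^{-k}}\setminus B_{2^{-k-1}}$ of the form $C\,r\sin(\theta-\phi(2^{-k}))^+$ and $c\,r\sin(\theta-\phi(2^{-k}))^+$, patched by maximum-principle comparisons. The slow-variation hypothesis $r|\phi'(r)|\to 0$ is the quantitative ingredient making successive barriers compatible across adjacent scales with summable mismatch, yielding the uniform two-sided estimate. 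This barrier construction is the technical heart of the argument, and it is also where one may need to further temper the oscillation of $\phi$ (e.g.\ replacing the inner $\log\log$ by $\log\log\log$) to make the cross-scale errors truly summable.
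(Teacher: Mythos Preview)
Your approach is genuinely different from the paper's. The paper does not fix a spiral domain in advance and then analyze the harmonic function on it; instead it constructs $u$ iteratively. Each step applies a ``$\theta_k$-turn'' to the interface inside a small ball, using an explicit Schwarz--Christoffel conformal map to compute exactly how much $J(0+,\cdot)$ drops: turning by $\theta$ costs at most a multiplicative factor $(1-\theta^2)^2$. Choosing $\theta_k=1/(k+N_0)$ gives $\sum\theta_k=\infty$ (so the interface spirals with no tangent) while $\prod_k(1-\theta_k^2)^2>0$ (so $J(0+,u)>0$). The two-sided linear bound and the positivity of $\Phi(0+)$ are thus obtained by direct bookkeeping, not by barriers.

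Your proposal has a real gap at exactly the point you flag as the ``technical heart.'' The condition $r|\phi'(r)|\to 0$ is \emph{not} the correct quantitative hypothesis for the lower barrier to propagate across scales with controllable loss; it only ensures the interface is asymptotically flat on each annulus, which is enough for blow-up compactness but not for nondegeneracy. What actually controls the decay of the ACF quantity is the square-summability of the per-scale turning, i.e.\ $\int_0^1 r|\phi'(r)|^2\,dr<\infty$ (equivalently $\sum_k|\phi(2^{-k})-\phi(2^{-k-1})|^2<\infty$); the paper states this as a conjecture for the sharp condition. Your example $\phi(r)=\sin(\log\log(1/r))$ happens to satisfy it, but the argument you sketch does not isolate this, and a spiral with $r|\phi'|\to 0$ but $\int r|\phi'|^2=\infty$ (e.g.\ $\phi(r)=\sqrt{\log(1/r)}$) would presumably have $\Phi(0+)=0$. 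Moreover, your fallback suggestion of replacing $\log\log$ by $\log\log\log$ does not address the issue, since the difficulty is not the size of $r|\phi'|$ but its square-integrability against $dr/r$. The paper's inductive conformal-map construction sidesteps all of this because the drop in $J$ is computed explicitly at each step rather than inferred from a comparison argument on a fixed domain.
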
 
 In the above, the boundary of a measurable set $A$ is said to \emph{admit a tangent (plane)} at the origin if
 \begin{equation}\label{e:ldenpos}
 0< \liminf_{r\searrow 0} \frac{|B_r \cap A|}{|B_r|}\leq \limsup_{r\searrow 0} \frac{|B_r \cap A|}{|B_r|}<1
 \end{equation}
 and there is a unit vector $\nu$ such that
 \[
  \lim_{r\searrow 0} \frac{1}{r}\max_{x\in \partial A \cap B_r} |x \cdot \nu| = 0.  
 \]
 The boundary is said to admit an \emph{approximate tangent (plane)} if \eqref{e:ldenpos} holds and 
 \[
 \lim_{r \searrow 0} \frac{1}{r^{n+1}} \int_{B_r \cap \partial A}|x \cdot \nu|^2 d\mathcal{H}^{n-1} = 0.
 \]
 Here $\mathcal{H}^{n-1}$ denotes $n-1$ dimensional Hausdorff measure. Note that if $u,\tilde{u}$ are as in Claim \ref{claim} and $A$ is either $\{u>0\}$ or $\{\tilde{u}>0\}$, then \eqref{e:ldenpos} holds (see Corollary 12.4 in \cite{cs05}).
 
It seems that the notion of approximate tangent above (or another similar measure-theoretic notion) is the more meaningful one in this context. Indeed, there are simpler constructions which produce functions $u,\tilde{u}$ as in Claim \ref{claim} for which $\partial \{u>0\}$ does not admit a tangent at $0$ but does admit an approximate tangent.

If one only considers functions $u$ for which $\partial \{u>0\}$  is, say, given by a $1$-Lipschitz graph over some plane $\pi_r$ on every annulus $B_{2r}\backslash B_r$, these two notions of tangent plane are equivalent. This property holds for the example constructed in the proof of Theorem \ref{t:main}.

The functions $u,\tilde{u}$ we construct in proving the theorem have $\partial \{u>0\}$ a spiral: while $u+\tilde{u}$ looks more and more like $\alpha (x \cdot \nu)_+ + \beta (x\cdot \nu)_-$ on progressively smaller balls $B_r$, the choice of $\nu$ can not be made uniformly in $r$, and the optimal $\nu$ rotates (slowly) as $r$ decreases. Some free boundary problems are known to exhibit spiraling patterns for the interface (see \cite{MR1871348, 2017arXiv170701051T} for examples, although the spirals produced there have different properties from ours).
We also remark that an example of nonunique tangents for an energy minimization problem is given in \cite{w92}. 
  
\subsection{Further Questions}

Before turning to the proof of Theorem \ref{t:main} we would like to offer some discussion of the further questions raised by this theorem and speculate on what ``optimal" results, both positive and negative, might look like.
 
A standard argument with the ACF formula shows that if $u,\tilde{u}$ are as in Claim \ref{claim}, then for every sequence $r_k\rightarrow 0$, there is a subsequence $r_{k_j}$ such that
\[
\lim_{j\rightarrow \8} \frac{1}{r_{k_j}^{n+2}}\int_{B_{r_{k_j}}}|u(x) - \alpha (x\cdot \nu)_+ - \beta (x\cdot \nu)_-|^2 = 0,
\]
where $\alpha,\beta,\nu$ depend on the subsequence. Let us refer to any such subsequence $r_{k_j}$ as a \emph{blow-up subsequence}. We are interested in whether or not these parameters may be chosen independent of the blow-up subsequence.

In the example constructed below, the functions $u$ and $\tilde{u}$ are rotations of one another around the origin; in particular, this means that for all of the blow-up subsequences, $\alpha=\beta = c \sqrt{\Phi(0+,u,\tilde{u})}$ are the same, while $\nu$ depends on the particular subsequence. 

This example gives one way for \eqref{e:tangent} to fail. There could, in principle, be another way: say that $\partial \{u>0\}=\partial\{ \tilde{u}>0\}$ is given by a $C^1$ hypersurface (including up to the origin, so that it admits a tangent there), and that $u,\tilde{u}$ are as in Claim \ref{claim}. Can one find a pair $u,\tilde{u}$ like this for which \eqref{e:tangent} fails? This would mean that between the various blow-up subsequences, $\nu$ would remain fixed, while $\alpha$ and $\beta$ would vary. Note that if the hypersurface is more regular near the origin (in particular, if it is a Lyapunov-Dini surface), then this is impossible.

Another set of questions is related to optimality in \ref{t:main}. To clarify the discussion, define, for each $r$, $\nu(r)$ to be the best approximating normal vector:
\[
\int_{B_r \cap \partial\{u>0\}} |x \cdot \nu(r)|^2 d\mathcal{H}^{n-1} = \min_{\nu \in S^{n-1}} \int_{B_r \cap \partial\{u>0\}} |x \cdot \nu|^2 d\mathcal{H}^{n-1}.
\]
It may be verified that $\nu(r)$ is uniquely determined from this relation and depends in a Lipschitz manner on $r$. The property of having an approximate tangent, then, can be reformulated as saying that $\nu(r)$ has a limit as $r\rightarrow 0$, while Theorem \ref{t:main} gives an example where
\begin{equation}\label{e:g1}
\int_0^1  \left|\frac{d \nu(r)}{dr}\right| = \8.
\end{equation}

What restrictions on the change in $\nu(r)$, one may ask then, are implied by the conditions in Claim \ref{claim}? We conjecture that under those conditions, one must have
\begin{equation}\label{e:g2}
\int_0^1 r \left|\frac{d\nu(r)}{dr}\right|^2 < \8;
\end{equation}
on the other hand, for any $\nu_0(r)$ satisfying \eqref{e:g1} and \eqref{e:g2}, there is a pair of functions $u,\tilde{u}$ as in Claim \ref{claim} with $\nu_0(r)$ with $|\frac{d\nu(r)}{dr}|\geq |\frac{d\nu_0(r)}{dr}|$. To explain the source of \eqref{e:g2}, let us point out that in Section \ref{s:conformal}, we construct a pair of functions $u,\tilde{u}$ for which
\[
\int_0^\infty  \left|\frac{d \nu(r)}{dr}\right| = \theta
\]
and $\frac{\Phi(0+,u,\tilde{u})}{\Phi(\8,u,\tilde{u})} \geq 1 - \theta^2$ (and this dependence on $\theta$ seems to be sharp up to constants). By gluing truncated and scaled versions of this construction, one might hope to attain functions $u,\tilde{u}$ satisfying the hypotheses of Claim \ref{claim}, and with
\[
\int_{2^j}^{2^{j+1}}  \left|\frac{d \nu(r)}{dr}\right| \approx \theta_i,
\]
for any sequence $\theta_i$ for which $\prod_i (1-\theta_i^2)>0$. This restriction is equivalent to \eqref{e:g2} for such a construction. In the actual proof of Theorem \ref{t:main}, we are unable to perform the truncation and gluing steps uniformly in $\theta$, and so do not obtain such a quantitative result.

Finally, over the past two decades enormous progress has been made in understanding the relationship between the behavior of positive harmonic functions with zero Dirichlet condition near the boundaries of domains and the geometric measure-theoretic properties of the boundary (we do not attempt to provide a summary here, but refer the reader to the introduction and references in \cite{ahmmmtw16}). We suggest that the questions above can be thought of as a continuation, or extension, of this program, with the goal of relating (finer) geometric properties of a boundary to the simultaneous behavior of positive harmonic functions on a domain and its compliment, using the ACF formula as a crucial tool.
  
\subsection{Outline of Proof}
 
 To prove Theorem \ref{t:main} we will construct a subharmonic function $u\geq 0$ in $\R^2$ such that $u$ is harmonic in its positivity set and $u(0)=0$. Furthermore, $\partial\{u>0\}$ 
 will be invariant under a rotation of $\pi$. Consequently, if $\tilde{u}(z):=u(-z)$, then the pair $u,\tilde{u}$ will satisfy the assumptions of the ACF
 formula in Proposition \ref{p:acf}. Before explaining the construction of $u$ and the outline of the paper, we first give two definitions. 
 
 We define the class of functions in $\R^2$
  \[
  \begin{aligned}
   \mathcal{K} := \{&u \in C(B_1): 
      u \geq 0 \text{ in } B_1, 
      \ \Delta u =0 \text{ in } \{u>0\},\\ & u(0)=0, \ u(z) \cdot u(-z)=0, \  \text{ and  }  \partial\{u(z)>0\} = \partial\{u(-z)>0\}  \}. 
   \end{aligned}
  \]  
 By working in the class $\mathcal{K}$, we may consider using a one-sided rescaled version of the ACF formula. If $u \in \mathcal{K}$, then  
  \[
   J(r,u):= \left(\frac{2}{\pi r^2} \int_{B_r} |\nabla u|^2\right)^{1/2} 
  \]
 is monotonically nondecreasing in $r$ since $J(r,u)= (2/\pi)^2 \sqrt{\Phi(r,u(z),u(-z))}$. 
 Furthermore, if $u$ is $C^1$ up to $\partial\{u>0\}$ near the origin, then 
 $J(0+,u)= |\nabla u(0)|$. 
 
 In order to prove Theorem \ref{t:main} we first show in Section \ref{s:conformal}, working on unbounded domains, that it is possible to turn $\partial\{u>0\}$ so that its asymptotic behavior at infinity differs from its tangent near the origin by an angle of $\theta$ 
 while arranging so that $J(\infty,u)- J(0+,u) < 1- \theta^2$ (for small $\theta$). In Section \ref{s:bounded} we transfer this result to a bounded domain. In Section \ref{s:counter} we
 inductively construct a sequence of functions in $\mathcal{K}$ and take a limit to obtain the $u$ in Theorem \ref{t:main}. Heuristically, the value of $J(0+,u)$ should be $\prod (1-\theta_i^2)$, and this is strictly positive if, say, $\theta_i = i^{-1}$.  On successively smaller balls, the interface $\{u=0\}$ will have turned a total amount of $\sum i^{-1}\rightarrow \infty$, 
 which implies that the interface spirals towards the origin and therefore lacks a unique tangent there. 
 We make these heuristic ideas rigorous, and then we show how 
 the pair $u,\tilde{u}$ also provide a counterexample in higher dimensions.

  \section{Conformal Mapping} \label{s:conformal}
  We utilize the Schwarz-Christoffel formula to obtain a conformal mapping.  For a fixed angle $0< \theta <\pi/2$, 
  we map the upper half plane to the domain $\Omega_{\theta}$ (see Figure \ref{f:map1}) by the 
  conformal mapping $f_{\theta}$ with derivative
   \begin{equation}   \label{e:ftheta}
    f_{\theta}'(z)= (z-(-1))^{(\pi + \theta)/\pi -1}(z-1)^{(\pi-\theta)/\pi -1} = \left(\frac{z+1}{z-1} \right)^{\theta/\pi}. 
   \end{equation}
   
    \begin{figure}[h]    
    \includegraphics[width = 10cm]{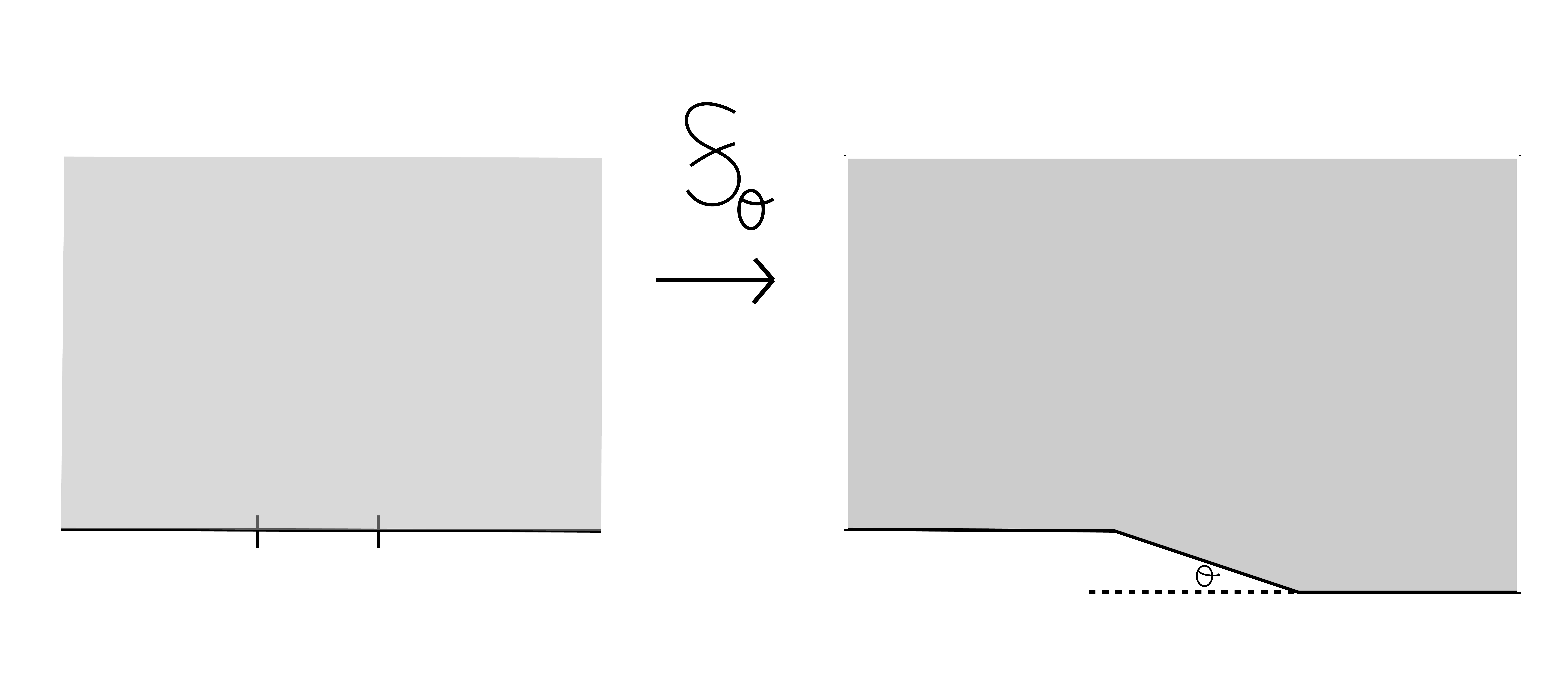}
    \caption{Conformal Map}\label{f:map1}
    \end{figure}

   We translate $f_{\theta}$ by a constant $z_0$, so that the midpoint of the line segment in the image is the origin $0+0i$. 
   We define $t_{\theta} \in (-1,1) \subset \mathbb{R}$ to be 
   $t_{\theta} = f_{\theta}^{-1}(0+0i)$. 
   Clearly, $t_\theta \to 0$ as $\theta \to 0$. What is of importance is how quickly $t_{\theta} \to 0$. 
   In order to determine this decay rate we use the following result. 
    \begin{lemma}  \label{l:integral}
     Let $f,g>0$ be 
     integrable functions on an interval $I$. If $f/g$ is an increasing function, then for any $x_1<x_2<x_3<x_4$ with each 
     $x_i \in I$, we have
      \[
         \frac{\int_{x_1}^{x_2} f}{\int_{x_1}^{x_2} g}  \leq \frac{\int_{x_3}^{x_4} f}{\int_{x_3}^{x_4} g}
      \]
    \end{lemma}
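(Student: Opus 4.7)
The plan is to exploit the monotonicity of $f/g$ pointwise and then integrate, turning a pointwise inequality into the desired integral inequality. The key observation is that if $t \in [x_1, x_2]$ and $s \in [x_3, x_4]$, then $t < s$, so by hypothesis $f(t)/g(t) \leq f(s)/g(s)$. Since both $g(t)$ and $g(s)$ are strictly positive, this rearranges to the pointwise product inequality
\[
f(t)\,g(s) \leq f(s)\,g(t).
\]

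Next, I would integrate this inequality over the rectangle $[x_1,x_2] \times [x_3,x_4]$ with respect to $dt\,ds$. By Fubini and the product structure of the integrand, the left-hand side becomes $\bigl(\int_{x_1}^{x_2} f\bigr)\bigl(\int_{x_3}^{x_4} g\bigr)$ and the right-hand side becomes $\bigl(\int_{x_3}^{x_4} f\bigr)\bigl(\int_{x_1}^{x_2} g\bigr)$. Since $g > 0$ on $I$, both $\int_{x_1}^{x_2} g$ and $\int_{x_3}^{x_4} g$ are strictly positive (assuming the intervals have positive length; otherwise the claim is vacuous), so dividing yields the lemma.

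There is no real obstacle here: the proof is a standard Chebyshev-type rearrangement argument, and the monotonicity of $f/g$ is used only through the pointwise inequality above. The only minor care needed is to handle the case where one of the intervals has measure zero (trivial) and to note that integrability of $f$ and $g$ plus positivity ensures Fubini applies to $f(t)g(s)$ and $f(s)g(t)$ on the rectangle.
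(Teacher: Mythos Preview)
Your argument is correct. It differs from the paper's in a small but genuine way: the paper compares each integral ratio to a pointwise value of $f/g$ at the relevant endpoint, obtaining the chain
\[
\frac{\int_{x_1}^{x_2} f}{\int_{x_1}^{x_2} g} \;\le\; \frac{f(x_2)}{g(x_2)} \;\le\; \frac{f(x_3)}{g(x_3)} \;\le\; \frac{\int_{x_3}^{x_4} f}{\int_{x_3}^{x_4} g},
\]
whereas you bypass the endpoint values entirely via the Chebyshev-type double integration of $f(t)g(s)\le f(s)g(t)$ over the rectangle. The paper's route gives the slightly sharper intermediate information that the pointwise value $f/g$ at any point between the intervals separates the two integral ratios; your route is a touch more robust, since it never evaluates $f/g$ at a specific point and hence is insensitive to whether $f,g$ are defined at $x_2,x_3$ individually (only the a.e.\ monotonicity of $f/g$ is used). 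Both are standard and either would serve perfectly well here.
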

   
    \begin{proof}
     Since $f/g$ is increasing we have that 
      \[
       \int_{x_1}^{x_2} f(x) \ dx \leq \int_{x_1}^{x_2} \frac{f(x_2)}{g(x_2)} g(x) \ dx. 
      \]
      Consequently, we have
      \[
       \frac{\int_{x_1}^{x_2} f(x) \ dx}{\int_{x_1}^{x_2} g(x) \ dx} \leq \frac{f(x_2)}{g(x_2)}. 
      \] 
      By the same argument, we have that 
      \[
       \frac{f(x_3)}{g(x_3)} \leq \frac{\int_{x_3}^{x_4} f(x) \ dx}{\int_{x_3}^{x_4} g(x) \ dx},
      \]
      and so the conclusion follows. 
    \end{proof}
   
   We will also need the following 
    \begin{lemma}   \label{l:resource}
     Let $f\geq g>0$ be integrable and continuous on $[0,1)$ with $f \geq g$ and $f/g$ increasing, and 
      \[
       \int_0^1 f , \int_0^1 g  \ > M. 
      \]
     Let $x_1,x_2$ be the unique values such that 
      \begin{equation}  \label{e:ghelp}
       M + \int_0^{x_1} g = \int_{x_1}^1 g \quad \text{ and } M + \int_0^{x_2} f = \int_{x_2}^1 f.
      \end{equation}
     Then $x_1 \leq x_2$.
    \end{lemma}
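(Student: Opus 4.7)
The plan is to reduce the claim $x_1 \leq x_2$ to a single scalar inequality, and then extract that inequality from Lemma \ref{l:integral}. Define $G(x) := \int_x^1 g - \int_0^x g - M$. Since $g>0$ is continuous, $G$ is strictly decreasing on $[0,1]$, and the first equation in \eqref{e:ghelp} says exactly $G(x_1)=0$. Consequently $x_1 \leq x_2$ is equivalent to $G(x_2) \leq 0$, i.e., to
\[
\int_{x_2}^1 g - \int_0^{x_2} g \leq M.
\]

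To establish this, I would apply Lemma \ref{l:integral} to the pair $(f,g)$ with splitting points $0 < x_2 < x_2 < 1$ (treating $x_2$ as both the right endpoint of the first interval and the left endpoint of the second). Since $f/g$ is increasing by hypothesis, Lemma \ref{l:integral} gives
\[
\frac{\int_0^{x_2} f}{\int_0^{x_2} g} \leq \frac{\int_{x_2}^1 f}{\int_{x_2}^1 g}.
\]
Using the second equation in \eqref{e:ghelp} to substitute $\int_{x_2}^1 f = M + \int_0^{x_2} f$ and clearing denominators yields
\[
\Bigl(\int_0^{x_2} f\Bigr)\Bigl(\int_{x_2}^1 g - \int_0^{x_2} g\Bigr) \leq M \int_0^{x_2} g.
\]

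To finish, I would use the pointwise inequality $f \geq g$, which gives $\int_0^{x_2} f \geq \int_0^{x_2} g$. Note also that $x_2 > 0$: if $x_2 = 0$ the defining relation would read $M = \int_0^1 f$, contradicting $\int_0^1 f > M$. Hence $\int_0^{x_2} g > 0$ (and so $\int_0^{x_2} f > 0$), and dividing the displayed inequality by $\int_0^{x_2} f$ yields
\[
\int_{x_2}^1 g - \int_0^{x_2} g \leq M \cdot \frac{\int_0^{x_2} g}{\int_0^{x_2} f} \leq M,
\]
which is $G(x_2) \leq 0$, completing the proof. No serious obstacle is expected; the only slightly delicate step is verifying the positivity needed to divide, handled by the observation $x_2 > 0$ above.
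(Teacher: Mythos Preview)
Your argument is correct and essentially mirrors the paper's: the paper applies Lemma~\ref{l:integral} at $x_1$ to obtain $M+\int_0^{x_1} f \le \int_{x_1}^1 f$ (i.e., the $f$-balance is nonnegative at $x_1$), whereas you apply it at $x_2$ to obtain the dual statement $\int_{x_2}^1 g - \int_0^{x_2} g \le M$ (the $g$-balance is nonpositive at $x_2$). Both routes use exactly the same two ingredients, Lemma~\ref{l:integral} and the pointwise bound $f\ge g$, and both tacitly rely on $M\ge 0$ for the final step (which holds in the paper's application).
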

   
   \begin{proof}
    We have that 
     \[
      \frac{M + \int_0^{x_1} f}{M + \int_0^{x_1} g} \leq \frac{ \int_0^{x_1} f}{ \int_0^{x_1} g} \leq \frac{\int_{x_1}^{1} f}{\int_{x_1}^{1} g},
     \]
     where the second inequality is due to Lemma \ref{l:integral}. Since $x_1$ is chosen so that \eqref{e:ghelp} holds, we have that 
     the denominator in the above inequality is the same so that 
     \[
      M + \int_0^{x_1} f \leq \int_{x_1}^{1} f. 
     \]
     Then $x_1 \leq x_2$. 
   \end{proof}
   
   The above two Lemmas allow us to prove 
    \begin{lemma}   \label{l:decay}
     Let $f_{\theta}$ be defined as in \eqref{e:ftheta} and let $t_{\theta} = f_{\theta}^{-1}(0+0i)$. 
     Then there exists $\theta_0 >0$ such that 
     $0< t_{\theta} \leq 2 \theta/\pi $ as long as $0 < \theta \leq \theta_0$. 
    \end{lemma}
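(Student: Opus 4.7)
The plan is to identify $t_\theta$ as the preimage of the arc-length midpoint of the segment $f_\theta((-1,1))$ and to estimate that preimage directly. On $(-1,1)$ the ratio $(z+1)/(z-1)$ is negative, so taking the principal branch in \eqref{e:ftheta} yields $|f_\theta'(x)| = g_\theta(x) := \bigl(\tfrac{1+x}{1-x}\bigr)^{\theta/\pi}$. Since $f_\theta((-1,1))$ is a straight segment whose midpoint has been translated to $0$, the value $t_\theta \in (-1,1)$ must satisfy the arc-length bisection
\begin{equation*}
\int_{-1}^{t_\theta} g_\theta(x)\,dx = \int_{t_\theta}^{1} g_\theta(x)\,dx.
\end{equation*}
For $\theta > 0$, $g_\theta$ is strictly increasing with $g_\theta(0) = 1$, so at $t=0$ the right-hand side dominates, forcing $t_\theta > 0$.

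For the upper bound, set $\alpha := \theta/\pi$ and introduce $h(t) := \int_t^1 g_\theta - \int_{-1}^t g_\theta$. Then $h(t_\theta) = 0$, $h'(t) = -2 g_\theta(t) \leq -2$ on $[0,1)$ (since $g_\theta \geq 1$ there), so $t_\theta \leq h(0)/2$. The substitution $x \mapsto -x$ on $(-1,0)$ rewrites
\begin{equation*}
h(0) = 2\int_0^1 \sinh\!\bigl(\alpha \ln \tfrac{1+x}{1-x}\bigr)\,dx.
\end{equation*}
Applying the elementary termwise Taylor inequality $\sinh y \leq y \cosh y$ for $y \geq 0$, I obtain
\begin{equation*}
\frac{h(0)}{2} \leq \alpha \int_0^1 \ln\!\tfrac{1+x}{1-x}\cdot \cosh\!\bigl(\alpha \ln \tfrac{1+x}{1-x}\bigr)\,dx.
\end{equation*}
Restricting to $\alpha \leq \alpha_0 < 1$, the integrand is dominated by the integrable function $\ln\!\tfrac{1+x}{1-x}\cdot (1-x)^{-\alpha_0}$, so dominated convergence gives $h(0)/(2\alpha) \to \int_0^1 \ln\!\tfrac{1+x}{1-x}\,dx = 2\ln 2$ as $\alpha \searrow 0$. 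Since $2\ln 2 < 2$, for all sufficiently small $\theta$ this yields $t_\theta \leq h(0)/2 \leq 2\alpha = 2\theta/\pi$, which is exactly the claim.

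The main technical point is controlling the integrand near $x = 1$, where $\cosh(\alpha \ln \tfrac{1+x}{1-x})$ behaves like $\tfrac{1}{2}(1-x)^{-\alpha}$; the restriction $\alpha < 1$ converts this into a mild power singularity that is absorbed by the logarithm. I expect Lemmas \ref{l:integral} and \ref{l:resource} to give an alternative route by treating $M = \int_{-1}^0 g_\theta$ as a fixed resource on $[0,1)$ and comparing $g_\theta$ to a simpler model function to locate the equalization point, but the direct monotonicity estimate on $h$ seems the most transparent path and is sharp enough to produce the constant $2$.
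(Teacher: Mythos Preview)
Your argument is correct and cleaner than the paper's. Both proofs begin the same way, identifying $t_\theta$ via the arc-length bisection $\int_{-1}^{t_\theta} g_\theta = \int_{t_\theta}^1 g_\theta$ with $g_\theta(x) = \bigl(\tfrac{1+x}{1-x}\bigr)^{\theta/\pi}$, and both observe $t_\theta>0$ from monotonicity of $g_\theta$. From there the routes diverge. The paper invokes Lemmas~\ref{l:integral} and~\ref{l:resource}: it replaces $g_\theta$ on $(-1,0)$ by the smaller function $\bigl(\tfrac{1+t}{2}\bigr)^{\theta/\pi}$ and on $(0,1)$ by the larger function $(1-t)^{-2\theta/\pi}$, uses the ``monotone ratio'' comparison of those lemmas to show the bisection point only moves right under these substitutions, and then evaluates the resulting elementary antiderivatives and differentiates in $\theta$ to verify the bound at $2\theta/\pi$. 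You instead bound $t_\theta \le h(0)/2$ directly from $h'\le -2$ on $[0,t_\theta]$, rewrite $h(0)=2\int_0^1 \sinh(\alpha L)\,dx$ with $L=\ln\tfrac{1+x}{1-x}$, apply $\sinh y\le y\cosh y$, and use dominated convergence to get the upper bound converging to $2\ln 2<2$. Your approach bypasses the two auxiliary lemmas entirely and is shorter; the paper's approach is more explicit (everything reduces to closed-form antiderivatives) and in principle could be pushed to an effective $\theta_0$, though neither proof actually does so. One minor imprecision: dominated convergence gives that your \emph{upper bound} for $h(0)/(2\alpha)$ tends to $2\ln 2$, not $h(0)/(2\alpha)$ itself, but that is all you need; also the dominating function should carry a harmless factor $2^{\alpha_0}$.
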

   
    \begin{proof}
     To determine the midpoint of a line segment it suffices to find 
      the $x$-value. Consequently, we focus on the real part of the mapping $f_{\theta}$. If $t \in (-1,1)$, then 
        \[
         f'(t)= \left((-1) \frac{1+t}{1-t} \right)^{\theta/\pi} = \left(\frac{1+t}{1-t} \right)^{\theta/\pi} e^{i \theta}. 
        \]
       Thus, $t_{\theta}$ is the unique value in $(-1,1)$ such that
        \[
          \int_{-1}^{t_{\theta}} \left(\frac{1+t}{1-t} \right)^{\theta/\pi}  \ dt = \int_{t_{\theta}}^1 \left(\frac{1+t}{1-t} \right)^{\theta/\pi} \ dt. 
        \] 
       We now note that 
        \[
         \left(\frac{1+t}{1-t} \right)^{\theta/\pi} \geq \left(\frac{1+t}{2} \right)^{\theta/\pi} \quad \text{ if } -1 \leq t \leq 0.
        \]
        Then $t_{\theta}\leq \xi_{\theta}$ where $\xi_{\theta}$ is the unique value such that 
        \[
          \int_{-1}^0 \left(\frac{1+t}{2} \right)^{\theta/\pi}  \ dt + 
          \int_{0}^{\xi_{\theta}} \left(\frac{1+t}{1-t} \right)^{\theta/\pi}  \ dt = \int_{\xi_{\theta}}^1 \left(\frac{1+t}{1-t} \right)^{\theta/\pi} \ dt. 
        \]
       We also have that 
        \[
         \left(\frac{1+t}{1-t} \right)^{\theta/\pi} \leq \left(\frac{1}{1-t} \right)^{2\theta/\pi} \quad \text{ if } 0 \leq t \leq 1, 
        \]
        and
        \[
         \frac{\left(\frac{1}{1-t} \right)^{2\theta/\pi}}{\left(\frac{1+t}{1-t} \right)^{\theta/\pi}} = \left(\frac{1}{1-t^2} \right)^{\theta/\pi}
        \]
        is an increasing function on $(0,1)$. 
        If we let 
        \[
         M = \int_{-1}^0 \left(\frac{1+t}{2} \right)^{\theta/\pi}  \ dt,
        \]
        then we may apply Lemma \ref{l:resource} and conclude that $t_{\theta}\leq \xi_{\theta}\leq \tau_{\theta}$ where $\tau_{\theta}$
        is given by 
        \[
         \int_{-1}^0 \left(\frac{1+t}{2} \right)^{\theta/\pi}  \ dt + \int_0^{\tau_{\theta}} \left(\frac{1}{1-t} \right)^{2\theta/\pi} \ dt
           = \int_{\tau_{\theta}}^1 \left(\frac{1}{1-t} \right)^{2\theta/\pi} \ dt. 
        \] 
       The above integrals have elementary antiderivatives. In order to show that $\tau_{\theta}\leq 2 \theta/\pi$ for small $\theta$,  
       we choose $2\theta/\pi$ as the point of integration. By 
       taking explicit antiderivatives and simplifying, it suffices to show that 
       for small enough $\theta$, 
       \begin{equation}   \label{e:show}
        \frac{(1/2)^{\theta/\pi}}{1+\theta/\pi} + \frac{1-2(1-2\theta/\pi)^{1-2\theta/\pi}}{1-2\theta/\pi} \geq 0.
       \end{equation}
        The expression on the left of \eqref{e:show} evaluates zero as $\theta\to 0$. 
        If we take the derivative of the left side of \eqref{e:show} with respect to $\theta$  and let $\theta \to 0$ 
        we obtain $(1+\ln(1/2))/\pi>0$. Then 
        \eqref{e:show} is true as long as $0<\theta \leq \theta_0$ for $\theta_0>0$ chosen small enough. Hence we conclude that 
        $t_{\theta}\leq \tau_{\theta}\leq 2\theta/\pi$ for any $0<\theta \leq \theta_0$.  
    \end{proof}

    From \eqref{e:ftheta} we have that $|f_{\theta}'(z)| \to 1$ as $|z| \to \infty$. We let $\phi_{\theta}$ be the harmonic function in $\Omega_{\theta}$
    defined by 
     \[
      y^+ = \phi_{\theta}(u,v)
     \]
     where $f_{\theta}= u+ iv$. Since $1 = |\nabla \phi_{\theta}| |f'(z)|$, we have that $|\nabla \phi_{\theta}| \to 1$ as $|z| \to \infty$. 
     By a rotation of $\pi/2$ of $\phi_{\theta}$ we have a complementary harmonic function $\tilde{\phi}_{\theta}$ and can thus apply the ACF monotonicity formula. 
     We have that $J(\infty, \phi_{\theta}, \tilde{\phi}_{\theta}) = 1$. To find $J(0+, \phi_{\theta}, \tilde{\phi}_{\theta})$ we find $|\nabla \phi_{\theta}(0)|$. 
     This is given by 
      \[
       1=|\nabla \phi_{\theta}(0)| |f'(t_{\theta})|.
      \] 
      Thus 
     \[
      1\geq |\nabla \phi_{\theta}(0)| = \left( \frac{1-t_{\theta}}{1+t_{\theta}} \right)^{\theta/\pi},
     \]
     so $|\nabla \phi_{\theta}(0)|$ is an increasing function of $\theta$, and 
      \[
      1\geq |\nabla \phi_{\theta}(0)| \geq  \left( \frac{1-2\theta/\pi}{1+2\theta/\pi} \right)^{\theta/\pi}.
      \]
      Using L'Hopsital's rule we conclude that
      \[
       \lim_{\theta \to 0} \frac{1-\left( \frac{1-2\theta/\pi}{1+2\theta/\pi} \right)^{\theta/\pi}}{(\theta/\pi)^2} = 4>0. 
      \]
     As a consequence we have the following result. 
     \begin{lemma}  \label{l:theta0}
      There exists $\theta_0$ such that if $0< \theta \leq \theta_0$, then 
     \begin{equation}  \label{e:theta0}
      0<1- \theta^{2} < |\nabla \phi_{\theta}(0)|\leq 1. 
     \end{equation}
     \end{lemma}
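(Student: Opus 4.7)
My plan is that Lemma \ref{l:theta0} follows by directly assembling the ingredients established in the lines immediately preceding its statement, with no further analysis required beyond an elementary continuity argument at $\theta = 0$.

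First I would dispatch the upper bound. Since $t_\theta > 0$ by Lemma \ref{l:decay}, we have $\frac{1-t_\theta}{1+t_\theta} < 1$, and since $\theta/\pi > 0$, raising to this power gives $|\nabla \phi_\theta(0)| = \left(\frac{1-t_\theta}{1+t_\theta}\right)^{\theta/\pi} < 1$. This is essentially already observed in the paragraph above.

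For the lower bound, I would invoke the upper bound $t_\theta \leq 2\theta/\pi$ from Lemma \ref{l:decay} together with monotonicity of $x \mapsto \left(\frac{1-x}{1+x}\right)^{\theta/\pi}$ on $(-1,1)$ to obtain
\[
|\nabla \phi_\theta(0)| \geq \left(\frac{1-2\theta/\pi}{1+2\theta/\pi}\right)^{\theta/\pi}.
\]
Then I would appeal to the L'Hopital computation performed just above the lemma, which shows that
\[
1 - \left(\frac{1-2\theta/\pi}{1+2\theta/\pi}\right)^{\theta/\pi} = \left(\frac{4}{\pi^2} + o(1)\right)\theta^2
\]
as $\theta \to 0$. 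Since $4/\pi^2 < 1$, there exists $\theta_0 > 0$ such that the right side is strictly less than $\theta^2$ for all $0 < \theta \leq \theta_0$, which rearranges to $|\nabla \phi_\theta(0)| > 1 - \theta^2$, as required.

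There is no real obstacle here; the lemma is a clean packaging of the estimates already derived, and the only point to verify is the numerical fact $4/\pi^2 < 1$, which leaves enough room for the $o(1)$ error to be absorbed by shrinking $\theta_0$ if necessary.
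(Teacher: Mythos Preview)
Your proposal is correct and follows exactly the same argument the paper gives in the paragraph immediately preceding the lemma: bound $|\nabla\phi_\theta(0)|$ below by $\left(\frac{1-2\theta/\pi}{1+2\theta/\pi}\right)^{\theta/\pi}$ via Lemma~\ref{l:decay}, then use the L'H\^opital limit together with $4/\pi^2<1$ to conclude. There is nothing to add.
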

      Since $J(\infty,\phi_{\theta}) =1$ and $J(0+,\phi_{\theta})=|\nabla \phi_{\theta}(0)|$, Lemma \ref{l:theta0} shows that 
      \[
      J(\infty,\phi_{\theta})-J(0+,\phi_{\theta})<1 -\theta^2.
      \]

   \section{Bounded Domain}   \label{s:bounded}
   The aim of this section is to transfer the inequality in \eqref{e:theta0} to a harmonic function on a bounded domain. 
   We approximate $\Omega_{\theta}$ with domains $\Omega_{\theta,M}$, see Figure \ref{f:cmap2}.
    \begin{figure}[h]    
    \includegraphics[width = 10cm]{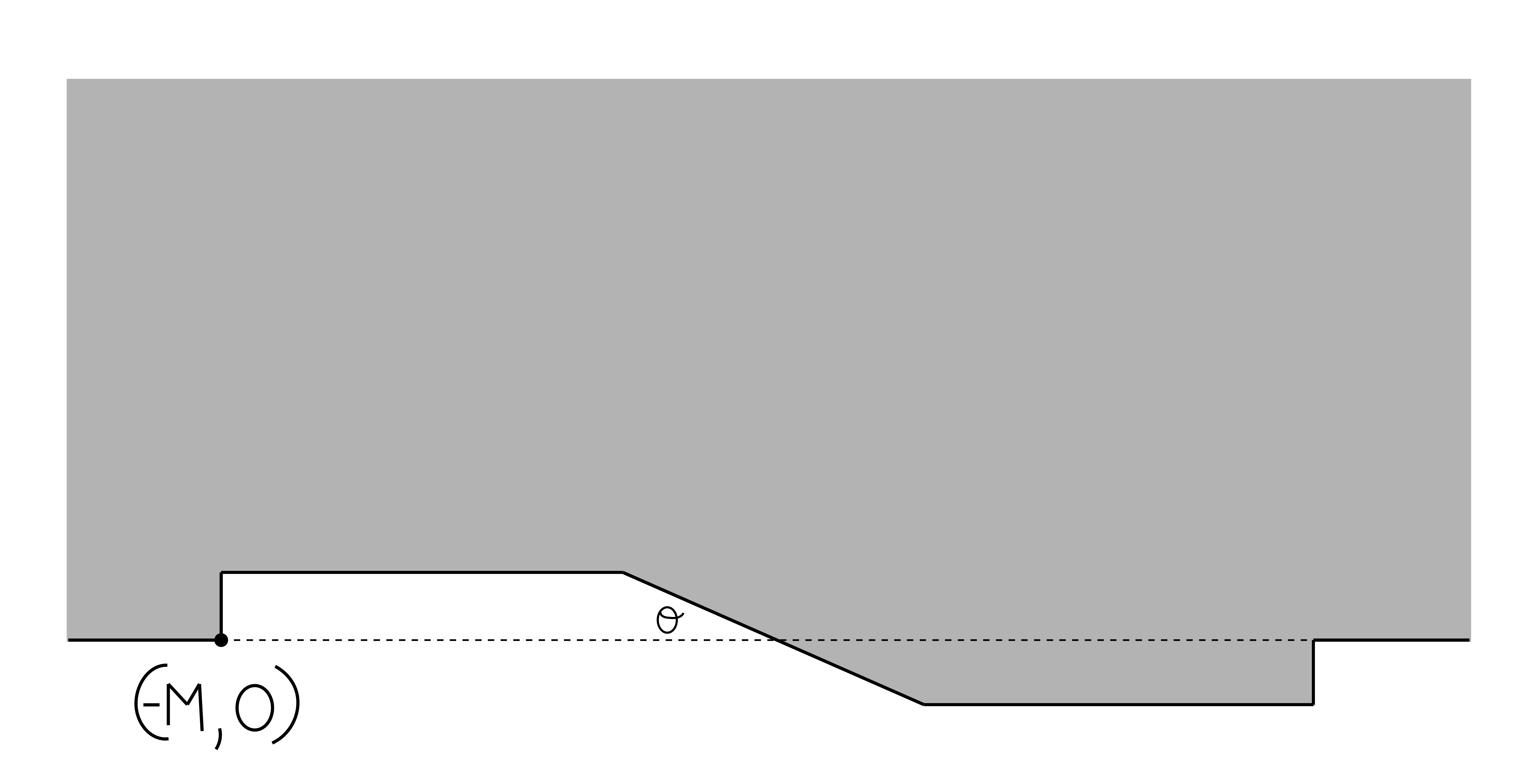}
    \caption{Domain $\Omega_{\theta,M}$}\label{f:cmap2}
    \end{figure}
      If $f_{\theta,M}$ is 
   the conformal mapping of the upper half plane onto $\Omega_{\theta,M}$, then
    \begin{equation}   \label{e:fthetam}
    f_{\theta,M}'(z)= \left(\frac{z+1}{z-1} \right)^{\theta/\pi} \left(\frac{z-z_2}{z+z_2} \right)^{1/2}\left(\frac{z+z_1}{z-z_1} \right)^{1/2}
    \end{equation}
   where $z_1,z_2 \in \R$ and $1<z_1<z_2$. We again translate $f_{\theta,M}$ by 
   a constant so that the domain is centered on the origin as in Figure \ref{f:cmap2}. 
   The points $z_1,z_2$ are chosen so that 
    $f_{\theta,M}(z_2)=M+0i$. We point out that 
   $|f_{\theta,M}'| \to 1$ as $|z| \to \infty$. We define $ \phi_{\theta,M}(u,v)=y^+$
     where $f_{\theta,M}= u+ iv$.

   \begin{lemma}  \label{l:phithetam}
    Fix $\theta \leq \theta_0$. There exists $M>0$, possibly depending on $\theta$, such that  
     $J(\infty, \phi_{\theta,M})=1$ and  $J(0+,\phi_{\theta,M})> 1 -  \theta^{2}$. 
   \end{lemma}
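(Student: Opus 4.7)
The plan is to transfer the strict inequality of Lemma~\ref{l:theta0} from the unbounded domain $\Omega_\theta$ to its approximations $\Omega_{\theta,M}$ by a continuity argument, exploiting the fact that as $M\to\infty$ (and correspondingly $z_1,z_2\to\infty$) the modified Schwarz--Christoffel map $f_{\theta,M}$ degenerates into $f_\theta$. The two required conclusions have different characters: $J(\infty,\phi_{\theta,M})=1$ is an asymptotic statement at spatial infinity, while $J(0+,\phi_{\theta,M})>1-\theta^{2}$ is a pointwise estimate at the origin.

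For the asymptotic claim, I observe from \eqref{e:fthetam} that the ``extra'' factors $\bigl(\tfrac{z-z_2}{z+z_2}\bigr)^{1/2}\bigl(\tfrac{z+z_1}{z-z_1}\bigr)^{1/2}$ tend to $1$ as $|z|\to\infty$, so $|f'_{\theta,M}(z)|\to 1$ there; via the identity $|\nabla \phi_{\theta,M}(f_{\theta,M}(z))|\cdot|f'_{\theta,M}(z)|=1$ this gives $|\nabla\phi_{\theta,M}|\to 1$ in $\Omega_{\theta,M}$ at infinity, and since the positivity set of $\phi_{\theta,M}$ asymptotically fills a half-plane, a direct computation yields $\tfrac{2}{\pi r^{2}}\int_{B_r}|\nabla\phi_{\theta,M}|^{2}\to 1$. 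For the local estimate, $0$ lies in the interior of the slit segment $f_{\theta,M}([-1,1])$, which is straight since $\arg f'_{\theta,M}$ is constant on $(-1,1)$; hence $\phi_{\theta,M}$ is $C^{1}$ up to $\partial\{\phi_{\theta,M}>0\}$ near $0$ and $J(0+,\phi_{\theta,M})=|\nabla\phi_{\theta,M}(0)|=1/|f'_{\theta,M}(t_{\theta,M})|$, where $t_{\theta,M}:=f_{\theta,M}^{-1}(0)\in(-1,1)$. From the explicit product formula it is immediate that $|f'_{\theta,M}|\to|f'_\theta|$ uniformly on compact subsets of $(-1,1)$; writing the centering condition as the midpoint equation
\[
\int_{-1}^{t_{\theta,M}}|f'_{\theta,M}(t)|\,dt=\int_{t_{\theta,M}}^{1}|f'_{\theta,M}(t)|\,dt
\]
and passing to the limit inside the integrals gives $t_{\theta,M}\to t_\theta$. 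Combining these two convergences yields $|\nabla\phi_{\theta,M}(0)|\to|\nabla\phi_\theta(0)|$, and since Lemma~\ref{l:theta0} supplies the \emph{strict} inequality $|\nabla\phi_\theta(0)|>1-\theta^{2}$, the desired bound holds for all sufficiently large $M$.

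The main technical step I expect to require care is justifying the passage to the limit in the midpoint equation: the integrand $|f'_{\theta,M}(t)|$ has integrable singularities at $t=\pm1$ that must be controlled uniformly in $M$. A bound of the form $|f'_{\theta,M}(t)|\leq C|f'_\theta(t)|$ on $(-1,1)$ -- valid once $z_1,z_2$ are bounded away from $1$, because the extra factors are then uniformly bounded on $[-1,1]$ -- supplies an $M$-independent integrable majorant, after which dominated convergence applies. Apart from this point, every other step is either an algebraic consequence of the Schwarz--Christoffel formula or a standard conformal-mapping identity.
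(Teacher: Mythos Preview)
Your proposal is correct and follows essentially the same approach as the paper: both argue that $J(\infty,\phi_{\theta,M})=1$ directly from $|f'_{\theta,M}(z)|\to 1$ as $|z|\to\infty$, and both obtain $J(0+,\phi_{\theta,M})>1-\theta^2$ by observing that the explicit formulas \eqref{e:ftheta} and \eqref{e:fthetam} force $\phi_{\theta,M}\to\phi_\theta$ (in $C^1$ near the origin) as $M\to\infty$, so the strict inequality of Lemma~\ref{l:theta0} passes to $\phi_{\theta,M}$ for $M$ large. Your version simply unpacks the $C^1$ convergence more explicitly---tracking the preimage $t_{\theta,M}\to t_\theta$ via the midpoint equation and a dominated-convergence argument---whereas the paper asserts the convergence in one sentence.
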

  
   \begin{proof}
    That $J(\infty, \phi_{\theta,M})=1$ follows from the definition of $\phi_{\theta,M}$ and \eqref{e:fthetam}. 
    Now from the explicit formulas given for $f_{\theta}'(z)$ and $f_{\theta,M}$ in \eqref{e:ftheta} and \eqref{e:fthetam} respectively, we have that 
     $\phi_{\theta,M} \to \phi_{\theta}$ in $C^1$ up to the boundary in a neighborhood of the origin. Since $|\nabla \phi_{\theta}(0)|> 1 - \theta^{2} $
     the conclusion follows. 
   \end{proof}
   
   \begin{remark}
   Since $J(r,\phi_{\theta,M})$ is monotonically increasing in $r$, it follows from Lemma \ref{l:phithetam} that $J(\infty,\phi_{\theta,M})-J(0+,\phi_{\theta,M})<1-\theta^2$.
   \end{remark}
  For any $\theta \leq \theta_0$, we fix an $M$ that satisfies Lemma \ref{l:phithetam}. 
  We now transfer the decrease in energy to a finite domain. 
  \begin{lemma}  \label{l:mapprox}
   Let $\theta$ and $\phi_{\theta,M}$ be as in Lemma \ref{l:phithetam}. Let $\Omega_{\theta,M}$ be defined as before. 
   If we define $w_R$ to be such that 
    \[
     \begin{aligned}
      \Delta w_R &=0 \text{ in } B_R \cap \Omega_{\theta,M} \\
      w_R &=0 \text{ on } \partial \Omega_{\theta,M} \cap B_R \\
      w_R &= y \text{ on }  (\partial B_R)^+,
     \end{aligned}
    \]
    then $w_R \to \phi_{\theta,M}$ locally uniformly in $\Omega_{\theta,M}$ and in $C^1$ in $B_{\rho}\cap \Omega_{\theta,M}$ for small enough $\rho$. 
  \end{lemma}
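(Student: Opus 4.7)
The plan is to apply the maximum principle to the difference $v_R := w_R - \phi_{\theta,M}$, which is harmonic on $B_R \cap \Omega_{\theta,M}$ and vanishes on $\partial \Omega_{\theta,M} \cap B_R$; everything reduces to controlling $|y - \phi_{\theta,M}|$ on the spherical part $(\partial B_R)^+$ as $R \to \infty$.

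The key geometric input is that the construction of $\Omega_{\theta,M}$ via the Schwarz-Christoffel formula \eqref{e:fthetam}, together with the specified choice of $z_1, z_2$ (so that $f_{\theta,M}(z_2) = M + 0i$) and the centering translation, makes $\Omega_{\theta,M}$ coincide with $\{y > 0\}$ outside some bounded ball $B_{R_0}$. This follows by tracking the image of the real axis under $f_{\theta,M}$: for real $z$ with $|z| > z_2$ one has $\arg f_{\theta,M}'(z) = 0$, so both unbounded arcs of $\partial \Omega_{\theta,M}$ are horizontal rays; the normalization $f_{\theta,M}(z_2) = M + 0i$ places the right ray on the real axis, and the centering translation together with the particular choice of $z_1, z_2$ places the left ray there as well (equivalently, after centering, the asymptotic expansion $f_{\theta,M}(z) = z + C + O(1/z)$ at infinity has real constant $C$).

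Granting this, on $\{y > 0\} \setminus B_{R_0}$ consider $g := \phi_{\theta,M} - y$: it is bounded, harmonic, vanishes on $\{y = 0\} \setminus B_{R_0}$, and satisfies $|\nabla g| \to 0$ at infinity (since $|\nabla \phi_{\theta,M}| \to 1$ and on the upper half-plane the unique positive harmonic function with unit asymptotic gradient vanishing on the real axis is $y$ itself). Separating variables in polar coordinates in the basis $\{\sin(k\theta)\}_{k \geq 1}$ vanishing on $\theta = 0, \pi$, the condition $|\nabla g| \to 0$ eliminates all positive-power modes $r^k \sin(k\theta)$, leaving $g(r, \theta) = \sum_{k \geq 1} b_k r^{-k} \sin(k\theta) = O(1/r)$. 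Hence on $(\partial B_R)^+$ we have $|v_R| = O(1/R)$, and the maximum principle on $B_R \cap \Omega_{\theta,M}$ yields $\|w_R - \phi_{\theta,M}\|_{L^\infty(B_R \cap \Omega_{\theta,M})} = O(1/R)$, which is stronger than locally uniform convergence.

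For the $C^1$ convergence in $B_\rho \cap \Omega_{\theta,M}$, pick $\rho$ small enough that $B_\rho \cap \partial \Omega_{\theta,M}$ is a straight line segment through the origin (the middle tilted segment lying strictly between the vertices $f_{\theta,M}(\pm 1)$). Both $w_R$ and $\phi_{\theta,M}$ are harmonic in $B_\rho \cap \Omega_{\theta,M}$ and vanish on this flat portion of the boundary, so standard boundary $C^{1,\alpha}$ Schauder estimates give uniform-in-$R$ bounds on $\nabla w_R$ in $B_{\rho/2} \cap \Omega_{\theta,M}$ in terms of $\|w_R\|_{L^\infty(B_\rho \cap \Omega_{\theta,M})}$, which is uniformly bounded by the preceding step. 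Arzelà-Ascoli then upgrades the uniform convergence to $C^1$ convergence on $B_{\rho/2} \cap \Omega_{\theta,M}$. The main technical subtlety is verifying the geometric claim that the chosen normalizations indeed place both asymptotic boundary rays on the real axis; once this is granted, the proof is a straightforward combination of the maximum principle and standard boundary elliptic regularity.
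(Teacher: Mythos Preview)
Your approach is correct but genuinely different from the paper's. The paper never attempts to control the additive difference $w_R-\phi_{\theta,M}$; instead it uses a multiplicative sandwich. Writing $\phi_R(x)=\phi_{\theta,M}(Rx)/R$, the paper observes that $\phi_R\to y^+$ in $C^1$ on $(\partial B_1)^+$ (this uses only $|f_{\theta,M}'|\to 1$), so for any $\eta>0$ and $R$ large,
\[
(1-\eta)y^+\le \phi_{\theta,M}\le (1+\eta)y^+ \quad\text{on }(\partial B_R)^+ .
\]
Since $w_R$ has boundary data $y$ there and both functions vanish on $\partial\Omega_{\theta,M}\cap B_R$, the maximum principle gives $(1-\eta)w_R\le\phi_{\theta,M}\le(1+\eta)w_R$ throughout $B_R\cap\Omega_{\theta,M}$. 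Any subsequential limit $w_\infty$ therefore satisfies $(1-\eta)w_\infty\le\phi_{\theta,M}\le(1+\eta)w_\infty$ for every $\eta$, forcing $w_\infty=\phi_{\theta,M}$; $C^1$ convergence near the origin follows by interior/flat-boundary estimates as you also argue.

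The trade-off: your route yields the quantitative rate $\|w_R-\phi_{\theta,M}\|_{L^\infty}=O(1/R)$, but it leans on the geometric claim that $\Omega_{\theta,M}$ coincides with $\{y>0\}$ outside a bounded set (equivalently, that both asymptotic boundary rays lie on the real axis). You correctly flag this as the main subtlety, and indeed it amounts to a nontrivial constraint on the parameters: expanding $f_{\theta,M}'(z)=1+a/z+O(z^{-2})$ one finds $a=2\theta/\pi+z_1-z_2$, and the two rays are at the same height only when $a=0$. The paper's multiplicative argument sidesteps this entirely, since it only needs $\phi_{\theta,M}/y\to 1$ at infinity, which holds regardless of the precise placement of the rays.
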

  
  \begin{proof}
   Using the rescaling 
    \[
     \phi_R := \frac{\phi_{\theta,M}(Rx,Ry)}{R}, 
    \] 
   we have that $\phi_R \to y^+$ in $C^1$ on $(\partial B_1)^+$. Thus, for any $\eta>0$, there exists $R_0>0$ such that if $R\geq R_0$, then 
    \[
     (1-\eta)y^+ \leq \phi_R \leq (1+\eta)y^+ \text{  on } (\partial B_1)^+. 
    \]
   Then rescaling back we obtain that $(1-\eta)y^+ \leq \phi_{\theta,M} \leq (1+\eta)y^+$ on $(\partial B_R)^+$ if $R \geq R_0$. From the maximum principle 
   we then have that 
    \[
     (1- \eta)w_R \leq \phi_{\theta,M} \leq (1+\eta)w_R \quad \text{ for any } R \geq R_0. 
    \]
    Then as $R \to \infty$, we have that $w_R \to w_\infty$ locally uniformly in $\Omega_{\theta,M}$ and in $C^1$ in a neighborhood of the origin. 
    Futhermore,  we have 
    $(1- \eta)w_\infty \leq \phi_{\theta,M} \leq (1+\eta)w_\infty$. Since $\eta$ can be taken to be arbitrarily small, we conclude that $w_\infty = \phi_{\theta,M}$. 
  \end{proof}
  
  We end this section by defining a $\theta$-turn. If $u \in \mathcal{K}$ and for some $\rho >0$ we have $\partial \{u>0\} \cap B_{\rho}$ is a line segment with inward unit 
  normal $\nu$, then a $\theta$-turn in $B_{\rho}$ gives a new 
   function $v$ with 
    \[
     \begin{aligned}
      (i)& \quad v \in \mathcal{K} \\
      (ii)& \quad v =u \text{ on } \partial B_1 \\
      (iii)& \quad \partial\{v>0\} \cap (B_1 \setminus \overline{B}_{\rho}) = \partial\{u>0\} \cap (B_1 \setminus \overline{B}_{\rho}) \\
      (iv)& \quad \partial\{v>0\} \cap B_{\rho} = \partial\{\phi_{\theta,M}(e^{i(\nu-\theta)} \frac{2M}{\rho}z)>0 \} \cap B_{\rho}. 
     \end{aligned}
    \]
    The idea of property $(iv)$ is to shrink $\phi_{\theta,M}$ on $B_{2M}$ to $B_{\rho}$ and give $v$ the same positivity set, see Figure \ref{f:turn} for when 
    $\nu = i$. 
    \begin{figure}[h]    
    \includegraphics[width = 10cm]{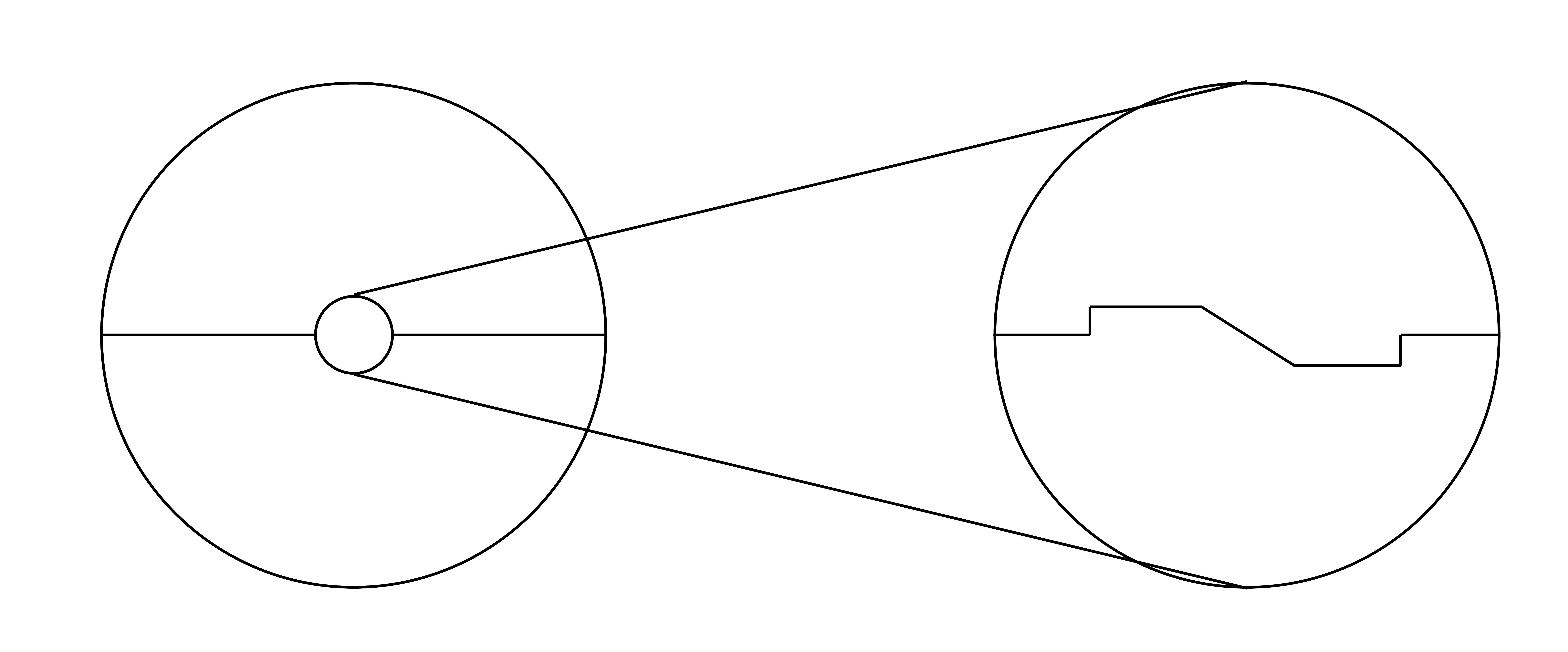}
    \caption{$\theta$-turn when $\nu=i$}\label{f:turn}
    \end{figure}

\section{Construction of counterexample}   \label{s:counter}

As before we let $\theta_0$ be as in Lemma \ref{l:theta0}. 
This next Lemma shows how to apply a $\theta$-turn to a function that is almost linear at the origin.
 
 \begin{lemma}   \label{l:inductive}
  Fix $\epsilon>0$. Assume $u \in \mathcal{K}$, and that there is a $s<r_0<1$ with 
   \[
    \begin{aligned}
     (1)& \  B_s \cap \partial \{u>0\} = B_s \cap \{ y_n =0 \} \\
     (2)& \ |u| < 2 J(1,u)r_0  \text{ on } B_{r_0}.
    \end{aligned}
   \] 
  If $\theta \leq \theta_0$, then there exists $r, \rho$ with $s > r> \rho>0$ with a $\theta$-turn in $B_{\rho}$ such that 
   if $v$ is the redefined function, then $v$ satisfies 
\[
\begin{aligned}
(A) \ & |v| < 2 J(1,v)r  \text{ on } B_{r}\\
(B) \ & |v| \leq (1+\theta^2)\sup_{B_t} |u| \text{ on } B_{t} \text{ for } t\in [r_0,1]\\
    (C) \ & J(1,v)\leq (1 + \theta^2) J(1,u) \\
    (D) \ &  J(0+,v) >  (1-  \theta^{2})^2 J(0+,u). 
\end{aligned}
\]
 \end{lemma}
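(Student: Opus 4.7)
My plan is to perform a single $\theta$-turn in a sufficiently small ball $B_\rho$ and analyze $v$ as a perturbation of $u$, combining the conformal construction of Section \ref{s:conformal} with maximum-principle estimates. After rotating coordinates so that the inward normal of $\{u>0\}$ at the origin is $e_n$, hypothesis (1) together with the harmonicity of $u$ on its positivity side and the vanishing on the flat segment $B_s \cap \{y_n = 0\}$ permits an odd reflection across that segment, producing a harmonic (hence real analytic) function on $B_s$. Consequently $u$ admits the expansion $u(x) = J(0+,u)\, x_n^+ + O(|x|^2)$, and the rescalings $u(\rho z)/\rho$ converge in $C^k$ on compact subsets of $\{z_n > 0\}$ to the linear profile $J(0+,u)\, z_n^+$ as $\rho \to 0$. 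Fix such a small $\rho$ and perform the $\theta$-turn in $B_\rho$, so that $v$'s positivity set in $B_\rho$ is a scaled, rotated truncation of $\Omega_{\theta,M}$ matched to $\{u>0\}$ along $\partial B_\rho$.

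To verify (D), I rescale by setting $\bar v(z) = v(\rho z)/\rho$. This function is harmonic on a fixed domain (an appropriately rotated copy of the positivity set of $\phi_{\theta,M}(2Mz)$ inside $B_1$, glued to the half-plane $\{z_n > 0\}$ outside) with boundary data on $\partial B_1$ converging to $J(0+,u)\, z_n^+$ as $\rho \to 0$. By Lemma \ref{l:mapprox}, the limit problem is solved by $J(0+,u)$ times a rotated rescaling of $\phi_{\theta,M}$, and the convergence is in $C^1$ near the origin. Hence $J(0+,v) = |\nabla v(0)| = |\nabla \bar v(0)| = J(0+,u)\, |\nabla \phi_{\theta,M}(0)|\,(1 + o_\rho(1))$, and Lemma \ref{l:phithetam} gives $|\nabla \phi_{\theta,M}(0)| > 1 - \theta^2$. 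Taking $\rho$ small enough that the error factor exceeds $1-\theta^2$ yields (D).

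For (B) and (C), I exploit that $\{u>0\}$ and $\{v>0\}$ agree outside $\overline{B_\rho}$, so $w := v - u$ is harmonic in the common open set with $w = 0$ on $\partial B_1$ and on the shared interface, while $\|w\|_{L^\infty(\partial B_\rho)} = O(\rho\, J(0+,u)\, \theta)$ (the functions differ only via the interface perturbation inside $B_\rho$, of linear size $O(\rho\theta)$, and both are nearly linear with slope $J(0+,u)$ at this scale). The maximum principle then gives $\|w\|_{L^\infty(B_1 \setminus B_\rho)} = o_\rho(1)$, with a comparable bound inside $B_\rho$ by comparison with its boundary values on $\partial B_\rho$. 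Choosing $\rho$ small enough that this error is below $\theta^2 \sup_{B_{r_0}}|u|$ yields (B) on every $B_t$ with $t \in [r_0,1]$, using that $\sup_{B_t}|u|$ is nondecreasing in $t$. A parallel $L^2$ estimate on $\int_{B_1}|\nabla w|^2$, together with the elementary bound $\int_{B_\rho}|\nabla v|^2 = O(J(0+,u)^2 \rho^n)$, gives (C) since $J(1,u)$ is bounded below. Property (A) then follows: near-linearity of $u$ outside $B_\rho$ and $C^1$ convergence of $\bar v$ inside give $|v(x)| \leq \max(J(0+,u), J(0+,v))\, |x|\,(1+o(1))$ on $B_r$ for $r$ chosen slightly larger than $\rho$, and this maximum is at most $(1+\theta^2) J(1,v) < 2 J(1,v)$ by (C) and ACF monotonicity.

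The main obstacle is quantitative bookkeeping: each $o_\rho(1)$ error must be pushed below a specific $\theta^2$-sized quantity, so $\rho$ must be chosen depending on $\theta$ (fixed at the outset) and on the given $u$, $r_0$, $s$. This requires explicit control of the conformal convergence rates implicit in the formulas \eqref{e:ftheta} and \eqref{e:fthetam}, which is feasible via standard complex-analytic estimates but is tedious; it is not a conceptual difficulty so much as a careful scaling argument that must be performed to ensure every inequality closes with the correct margin.
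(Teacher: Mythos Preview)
Your overall strategy matches the paper's: perform a $\theta$-turn in a small ball, show $v$ is close to $u$ away from the origin, and obtain (D) from Lemmas \ref{l:phithetam}--\ref{l:mapprox}. However, the execution has a circularity that the paper avoids by a different order of choices.

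The issue is in your treatment of (D) (and it propagates into (B), (C)). You set $\bar v(z)=v(\rho z)/\rho$ and assert that its ``boundary data on $\partial B_1$'' converges to $J(0+,u)\,z_n^+$. But that boundary trace is $v|_{\partial B_\rho}/\rho$, and $v$ itself depends on $\rho$; you have not established that $v$ is nearly linear at scale $\rho$ with slope $J(0+,u)$---indeed that is essentially the content of (D). The same circularity appears when you claim $\|v-u\|_{L^\infty(\partial B_\rho)}=O(\rho\,J(0+,u)\,\theta)$: nothing yet controls $v$ at scale $\rho$. The paper breaks this loop by choosing $r$ \emph{first}, independent of $\rho$, so that $u(r\,\cdot)/r$ is $\delta$-close to $J(0+,u)\,y^+$ on $(\partial B_1)^+$; then, as $\rho\to 0$, one has $v\to u$ locally uniformly on $B_1\setminus\{0\}$ (hence on $\partial B_r$), so $v(r\,\cdot)/r$ inherits the same $\delta$-closeness. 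The paper then compares $v$ with an explicit subsolution $w$ having linear boundary data $(1-\delta)J(0+,u)\,y^+$ on $\partial B_r$, rescales so the turn domain is fixed while the outer radius tends to infinity, and applies Lemma \ref{l:mapprox} directly.

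For (C) the paper also takes a different route: rather than estimating $\int|\nabla(v-u)|^2$ directly (which again needs a priori control of $v$ near the origin), it combines a Caccioppoli inequality on annuli $B_1\setminus B_\eta$ with the ACF monotonicity of $J(\cdot,v)$, which gives $\int_{B_\eta}|\nabla v|^2\le \frac{\eta^2}{1-\eta^2}\int_{B_1\setminus B_\eta}|\nabla v|^2$ and hence full $H^1(B_1)$ convergence of $v$ to $u$ as $\rho\to 0$. This trick supplies exactly the missing uniform control near the origin that your maximum-principle sketch lacks. Once you fix the order of quantifiers (choose $r$ before $\rho$) and either adopt this monotonicity argument or supply an honest barrier for $v$ on $\partial B_\rho$, your outline becomes the paper's proof.
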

 
 \begin{proof}
  We choose $r< s$ small enough so that
   \begin{equation}   \label{e:rhos}
    \|u(r x)/r - J(0+,u)y^+ \|_{C^1((\partial B_1)^+)}< \delta,
   \end{equation}
   and so that $|u|<2J(1,u)r$.  We now apply a $\theta$-turn in $B_{\rho}$
  with $0<\rho<r$. As $\rho \to 0$, we have that $v \to u$ uniformly away from the origin, so that by 
  choosing $\rho$ small enough, then $v$ satisfies $(B)$. 
     
   We now let $\eta>0$ be small and  use a cut-off function and obtain in the standard way the  Caccioppoli inequality 
   \[
    \int_{B_1 \setminus B_{\eta}} |\nabla v -u|^2 \leq C(\eta) \int_{B_1 \setminus B_{\eta/2}} |v-u|^2.
   \]
  Then as $\rho \to 0$, we have that $v \to u $ in $H^1(B_1\setminus B_{\eta})$ for any $\eta>0$.  
  We now use the monotonicity of $J(r,v)$ to prove 
  that $v \to u$ in $H^1(B_1)$ as $\rho \to 0$. We have
   \[
    \int_{B_{\eta}}|\nabla v |^2 \leq \eta^2 \int_{B_1} |\nabla v|^2 
           = \delta^2 \int_{B_1 \setminus B_{\eta}}|\nabla v|^2 + \int_{B_{\eta}}|\nabla v|^2,
   \]  
  so that 
   \[
    \int_{B_{\eta}}|\nabla v|^2 \leq \frac{\eta^2}{1-\eta^2} \int_{B_1 \setminus B_{\eta}}|\nabla v|^2,  
   \]
   and we conclude that 
   \[
    \int_{B_1} |\nabla v|^2 \leq \frac{1}{1-\eta^2}  \int_{B_1 \setminus B_{\eta}}|\nabla v|^2. 
   \]
  Then $\| v \|_{H^1(B_1)}$ is bounded as $\rho \to 0$, so that $v \rightharpoonup u$ in $H^1(B_1)$ as $\rho \to 0$. 
  We now have 
   \[
    \begin{aligned}
    \int_{B_1} |\nabla u|^2 &\leq \lim_{\rho \to 0} \int_{B_1} |\nabla v|^2  \\
    &\leq \lim_{\rho \to 0} \frac{1}{1-\eta^2} \int_{B_1 \setminus B_{\eta}} |\nabla v|^2 \\
    & =  \frac{1}{1-\eta^2} \int_{B_1 \setminus B_{\eta}} |\nabla u|^2.
    \end{aligned}
   \] 
  Since $\eta$ can be chosen arbitrarily small, we have that $\nabla v \to \nabla u$ in $L^2(B_1)$ and thus conclude that 
   $v \to u$ in $H^1(B_1)$ as $\rho \to 0$. 
   Consequently, we may choose $\rho$ even smaller so that properties $(A)$ and $(C)$ hold.  
   
   From \eqref{e:rhos}, if $\rho$ is chosen small enough we have    
   \[
   \|v(r z)/r - J(0+,u)y^+ \|_{C^1((\partial B_1)^+)}< \delta,
   \]
   so that $(1-\delta)J(0+,u)y^+ \leq v(r z)/r $ on $(\partial B_1)^+$. 
   We now define $w$ to be the solution to 
    \[
     \begin{cases}
     &\Delta w = 0 \ \text{ in }  \{v(r z)/r >0\} \cap B_1 \\
      & w = 0 \ \text{ on } \partial  \{v(r z)/r>0\} \cap B_1 \\
      & w = (1-\delta)J(0+)y^+ \ \text{ on } (\partial B_1)^+. 
      \end{cases}
    \]
   We have that $w \leq v$ in $B_1$, so that $|\nabla w(0)| \leq |\nabla v(0)|$ or $J(0+,w)\leq J(0+,v)$. 
   We may rescale $w$ and apply Lemma \ref{l:mapprox} 
   to obtain that for small enough $\rho$, we have 
    \[
     J(0+,w)>(1- \theta^2)(1-\delta)J(0+,u). 
    \]
   By choosing $\delta <  \theta^2$ we obtain (D). 
  \end{proof}

 \begin{proof}[Proof of Theorem \ref{t:main} in dimension $n=2$]
  We now use Lemma \ref{l:inductive} to construct a sequence $u_k \in \mathcal{K}$ with $\lim u_k \to u$. The pair 
   $u$ and $\tilde{u}(z):=u(-z)$ will be a counterexample
  to Claim \ref{claim}. The sequence $u_k$ is constructed inductively as follows. We choose
  $\theta_k=1/(k+N_0)$ where $N_0 \in \mathbb{N}$ is chosen large enough so that $\theta_k \leq \theta_0$. 
  We then let $u_0=y^+$ on $B_1$. 
  By Lemma \ref{l:inductive} there exists $\rho_1<r_1$ such that if a $\theta_1$ turn is applied in $B_{\rho_1}$ to obtain 
  $u_1$, then $u_1$ will satisfy properties $(A)-(D)$. We now suppose that $u_k$ has been constructed for some $k \geq1$. 
  By rotating $u_k$ it will satisfy assumption (1) of Lemma \ref{l:inductive}. Assumption (2) will also be satisfied because 
  $u_k$ satisfies $(A)$ for $r=r_k$. By Lemma \ref{l:inductive} there exists $\rho_{k+1}< r_{k+1}$ with 
  $r_{k+1}< \rho_k$ so that if we apply a $\theta_{k+1}$ turn to $u_k$ to obtain $u_{k+1}$ we have 
   \[
    \begin{aligned}
     (i)& \ |u_{k+1}| < 2J(1,u_{k+1})r \quad \text{ on } B_r. \\
     (ii)& \ |u_{k+1}| \leq \Pi_{j=1}^k (1+\theta_j^2) \sup_{B_t} |u_0| \quad \text{ on }  B_t \text{ for }  t \in [r_k,1]. \\
     (iii)& \ J(1,u_{k+1}) \leq \Pi_{j=1}^k (1+\theta_j^2) J(1,u_0)= \Pi_{j=1}^k (1+\theta_j^2). \\
     (iv)& \ J(0+,u_{k+1}) > \Pi_{j=1}^k (1-\theta_j^2)^2J(0+,u_0) = \Pi_{j=1}^k (1-\theta_j^2)^2.
    \end{aligned}
   \] 
  From the same arguments involving the Caccioppoli inequality as in the proof of Lemma \ref{l:inductive}, 
  there exists $u$ such that 
  $u_k \to u$ in $H^1(B_1)$ and locally uniformly away from the origin. Then $u$ is continuous away from 
  the origin. From $(i)$ we obtain that 
  $|u|\leq Cr$ on $B_r$ for $0<r\leq 1$, so that $u$ is continuous up to the origin, and $u(0)=0$.

   Now $0< \prod_{k=1}^{\infty} \left(1- \theta_k^{2}\right)^2$ if and only if $0<\prod_{k=1}^{\infty} \left(1- \theta_k^{2}\right)$ if and only if 
   \[
     \sum_{k=1}^{\infty} (k+N_0)^{-2}=\sum_{k=1}^{\infty} \theta_k^2 < \infty.
   \] 
   Since the above inequality is true, we conclude that  
   \[
    0< \prod_{k=1}^{\infty} \left(1- \theta_k^{2}\right)^2 < \prod_{k=1}^{\infty} \left(1- \theta_k^{2}\right)<1.
   \]
   The last inequality above is due to the fact that all the terms are less than 1. 
  Since $u_k \to u$ in $H^1(B_1)$ and from properties $(ii)$ and $(iii)$, we conclude that 
   \[
    0< \prod_{k=1}^{\infty} \left(1- \theta_k^{2}\right)^2 \leq J(r,u) \leq CJ(1,u) < \infty \quad \text{ for all } 0<r\leq 1, 
   \]
  so that $J(0+,u)>0$. 
  
  If we let $\tilde{u}_k(z)=u_k(-z)$, then $\tilde{u}_k \to \tilde{u}$ where 
  $\tilde{u}(z)=u(-z)$. Furthermore, $u \cdot \tilde{u} =0$ in $B_1$. Since also $u,\tilde{u}$ are nonnegative, continuous, and harmonic 
  when positive, they satisfy the assumptions of the ACF monotonicity formula in Proposition \ref{p:acf}. 
   We now show that $u, \tilde{u}$ are a counterexample to Claim \ref{claim}. 
   We assume by way of contradiction that $\{u>0\}$ and $\{\tilde{u}>0\}$ are tangent at the origin and after a rotation 
   $u(z)+\tilde{u}(z)=\alpha x_1^+ +\beta x_1^- + o(|z|)$. Then any  small $\delta>0$, there exists $r_0$ such that if $r\leq r_0$ and 
   $|z|>1/2$ and $|\text{Arg}(z)|<\delta$, then 
    \begin{equation}  \label{e:contra}
     \frac{u(rz)+\tilde{u}(rz)}{r}> \alpha x_1^+/2>0. 
    \end{equation}
   We now recall that from the construction  
    \begin{equation}  \label{e:spiral}
     \partial\{u>0\} \cap (B_{r_k}\setminus B_{\rho_k}) = \left\{z: z=te^{-i \sum_{j=1}^k \theta_j} \text{ and } \rho_k \leq  |t| < r_k  \right\}.
     \end{equation}
     Since $\sum \theta_k =\infty$ and $\theta_k \to 0$,  we obtain from \eqref{e:spiral} there exist infinitely many $z_k$ with $|z_k|\to 0$
     and $|\text{Arg}(z)|<\delta$ such that $u(z_k)+\tilde{u}(z_k)=0$. This contradicts \eqref{e:contra}, and so Claim \ref{claim} is not true.  
 \end{proof}
 
 We now show that the pair $u$ and $\tilde{u}$ are also a counterexample in higher dimensions. 
 \begin{proof}[Proof of Theorem \ref{t:main} in dimension $n>2$]
  For $u$ as in the proof for dimension $2$, we let $w_n(x_1,x_2,\ldots,x_n)=u(x_1,x_2)$. Since in dimension $n=2$ we have 
   \[
    \frac{1}{r^2} \int_{B_r} |\nabla u|^2 \geq C >0,
   \]
   it follows that in dimension $n$,
   \[
    \frac{1}{r^n} \int_{B_r} |\nabla w|^2 \geq C. 
   \]
  Then
  \[
   \frac{1}{r^2} \int_{B_r} \frac{|\nabla w|^2}{|x|^{n-2}} \geq \frac{1}{r^2} \int_{B_r} \frac{|\nabla w|^2}{r^{n-2}} =\frac{1}{r^n} \int_{B_r} |\nabla w|^2 \geq C>0,
  \]
  so that $\Phi(r,w,\tilde{w})>0$. We have already shown that $u+\tilde{u}$ cannot satisfy the conclusions in Claim \ref{claim}; consequently, 
  $w + \tilde{w}$ also do not satisfy those conclusions. 
  \end{proof}

\section*{Acknowledgments} DK was supported by the NSF MSPRF fellowship DMS-1502852. The authors would like to thank Luis Caffarelli for helpful conversations regarding this problem.

\bibliographystyle{amsplain}
\bibliography{refACF}

\end{document}